\tikzstyle{every state}+=[inner sep=0pt]
\tikzstyle{every edge}+=[->]
\tikzstyle{accepting}=[fill=gray!50]
\tikzstyle{extern}=[color=gray!50!black]
\newenvironment{closedcases}{%
  \matrix@check\cases\env@cases
}{%
  \endarray\right\rbrace%
}
\newcommand\rationals{\ensuremath{\mathbb Q}}
\newcommand\naturals{\ensuremath{\mathbb N}}
\newcommand\integers{\ensuremath{\mathbb Z}}
\newcommand\y{\ensuremath{\mathord\bullet}}
\newcommand\n{\ensuremath{\mathord\circ}}
\newcommand\uu{ -- ++(1,1) node {}}
\newcommand\dd{ -- ++(1,-1) node {}}
\newcommand\ut{ -- ++(1,2) node {}}
\newcommand\dt{ -- ++(1,-2) node {}}
\newtheorem{theorem}{Theorem}
\newtheorem{lemma}[theorem]{Lemma}
\newtheorem{proposition}[theorem]{Proposition}
\newtheorem{defn}[theorem]{Definition}
\newenvironment{definition}{\begin{defn}\textnormal\bgroup}{\egroup\end{defn}}
\let\epsilon\varepsilon
\DeclareMathOperator\inv{inv}
\DeclareMathOperator\cof{Cof}
\newcommand\ldb{\ensuremath{[\![}}
\newcommand\rdb{\ensuremath{]\!]}}
\newcommand\arraynl{\\[\extrarowheight]\hline}
\let\originalendarray\endarray
\renewcommand\endarray{\\[\extrarowheight]\originalendarray}
\newenvironment{mymatrix}{\setlength\extrarowheight{.25em}\begin{array}}
{\end{array}\setlength\extrarowheight{0pt}}
\title{Generalized Dyck paths of bounded height}
\author{Axel Bacher}
\begin{document}

\maketitle

\begin{abstract}
Generalized Dyck paths (or discrete excursions) are one-dimensional paths that
take their steps in a given finite set $S$, start and end at height~$0$, and
remain at a non-negative height. Bousquet-M\'elou showed that the generating
function $E_k$ of excursions of height at most $k$ is of the form
$F_k/F_{k+1}$, where the $F_k$ are polynomials satisfying a linear
recurrence relation. We give a combinatorial interpretation of the polynomials
$F_k$ and of their recurrence relation using a transfer matrix method. We
then extend our method to enumerate discrete meanders (or paths that start at
$0$ and remain at a non-negative height, but may end anywhere). Finally, we
study the particular case where the set $S$ is symmetric and show that several
simplifications occur.
\end{abstract}

\section{Introduction and notations}

A \emph{Dyck path} is a one-dimensional path taking its steps in $\{-1,1\}$,
starting and ending at $0$ and visiting only non-negative points
(Figure~\ref{dyck}, left). It is well-known that the number of Dyck paths of
length $2n$ is the $n$th \emph{Catalan number} $C_n$
\cite[Chapter~6]{stanley2}:
\[C_n = \frac1{n+1}\binom{2n}{n}.\]
Moreover, let $E = E(t)$ be the generating function of Dyck paths. This
generating function satisfies the following algebraic equation:
\[1 - E + t^2E^2 = 0.\]

A \emph{generalized Dyck path} (or \emph{discrete excursion}) takes its steps
in a given finite set $S$ instead of $\{-1,1\}$. A \emph{discrete meander} is
a slightly more general path: it takes its steps in $S$, starts at $0$ and
visits only non-negative points, but may end anywhere (figure~\ref{dyck},
right).

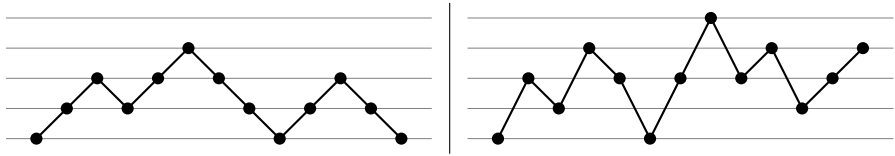
\begin{figure}[htbp]
\hfil%
\begin{tikzpicture}[scale=.4,baseline=0cm]
\tikzstyle{every node}=[circle,fill,inner sep=0pt,minimum size=1.6mm]
\foreach \i in {0,...,4}
    \draw[help lines] (-1,\i) -- ++(14,0);
\draw [thick] (0,0) node {} \uu\uu\dd\uu\uu\dd\dd\dd\uu\uu\dd\dd;
\end{tikzpicture}%\hfil%
\begin{tikzpicture}[scale=.4,baseline=0cm]
\draw (0,-.5) -- (0,4.5);
\end{tikzpicture}\hfil%
\begin{tikzpicture}[scale=.4,baseline=0cm]
\tikzstyle{every node}=[circle,fill,inner sep=0pt,minimum size=1.6mm]
\foreach \i in {0,...,4}
    \draw[help lines] (-1,\i) -- ++(14,0);
\draw [thick] (0,0) node {} \ut\dd\ut\dd\dt\ut\ut\dt\uu\dt\uu\uu;
\end{tikzpicture}%
\caption{Left: a Dyck path of height~$3$. Right: a discrete meander with set
of steps $S = \{\pm1,\pm2\}$, of height~$4$ and with final height~$3$.}
\label{dyck}
\end{figure}

A large number of papers enumerate generalized Dyck paths with varying amounts
of generality. Methods used include some properties of Laurent series
\cite{gessel}, grammars \cite{labelle,merlini,duchon,ayyer}, or the kernel
method \cite{banderier,bousquet}. Assume that all steps $s$ in $S$ have a
weight $\omega_s$ taken in some field of characteristic~$0$ (typically a field
of fractions with one or several variables over \rationals). Let $E$ and $M$
be the generating functions of excursions and meanders, respectively,
according to the weights~$\omega_s$. Banderier and Flajolet \cite{banderier}
showed that both these generating functions are algebraic. More precisely, let
$a = \max S$ and $b = -\min S$; one may compute polynomials of degree
\smash{$\binom{a+b}{a}$} canceling $E$ and $M$.

In this paper, we consider excursions and meanders with \emph{bounded height},
that is, that never go above a certain level, say $k$. We denote by $E_k$
the generating function of excursions of height at most $k$ and by
$E_{k,\ell}$ the generating function of meanders of height at most $k$ with
final height~$\ell$. We also denote by $M_k$ the generating function of
meanders of height at most $k$ regardless of final height.

Bousquet-Mélou \cite{bousquet} proved that the generating function $E_k$ is of
the form:
\[E_k = \frac{F_k}{F_{k+1}},\]
where the $F_k$ are polynomials in the weights~$\omega_s$. She also proved,
with the use of symmetric functions, that the polynomials~$F_k$ satisfy a
linear recurrence relation of order \smash{$\binom{a+b}{a}$}. In other words,
we have
\[\sum_{k\ge0} F_kz^k = \frac{N(z)}{D(z)},\]
where the polynomial $D(z)$ has degree \smash{$\binom{a+b}{a}$} (the
polynomial $N(z)$ has degree \smash{$\binom{a+b}{a} - a - b$}). Moreover, it
can be seen that the polynomial $D(z)$ cancels the generating function of
excursions $E$.

To our knowledge, the only cases of the generating function $E_{k,\ell}$
that were studied before with general steps are $\ell = 0$, that corresponds
to excursions, and $\ell = k$, that corresponds to \emph{culminating paths}
\cite{ponty}. In this paper, we use a transfer matrix method to compute both
generating functions $E_k$ and $E_{k,\ell}$.

\medskip

We now set some notations. As argued in \cite{bousquet}, excursions of height
at most $k$ are walks in a finite graph, with vertices $\{0,\dotsc,k\}$ and an
arc from $i$ to $j$ if $j-i$ is in $S$. We denote by $A_k$ the adjacency
matrix of this graph. If $s$ is in \integers, let $\beta_s$ be the quantity:
\[\beta_s = \delta_{s,0} - \begin{cases}
\omega_s&\text{if }s\in S\text,\\
0&\text{otherwise.}
\end{cases}\]
In this way, the $(i,j)$ entry of the matrix $1 - A_k$ is $\beta_{j-i}$.

If $m$ and $n$ are integers, we use the notation $\ldb m,n\rdb$ to denote the
set of integers $i$ such that $m\le i\le n$. We also use the notation
$\naturals_{\ge n}$ to denote the set of integers greater than or equal
to~$n$. Finally, if $X$ is a set and $n$ an integer, we call \emph{$n$-subset}
of $X$ a subset of $X$ of cardinality $n$.

\medskip

The paper is organized as follows. Section~\ref{excursions} deals with bounded
excursions, re-proving Bousquet-Mélou's results using a simple transfer matrix
method. This method is expanded in Section~\ref{meanders} to cover bounded
meanders as well. In Section~\ref{symmetric}, we discuss the case where the
set $S$ is symmetric and show that several simplifications occur.

% Finally, in
% Section~\ref{perspectives}, we present remaining problems that we believe are
% worth investigating.

\section{Bounded excursions}\label{excursions}

Our first step to enumerate excursions of height at most $k$ is the same as in
\cite{bousquet}. As these excursions are walks that go from $0$ to $0$ in the
graph described by the matrix $A_k$, the generating function $E_k$
is the $(0,0)$ entry of the matrix $(1 - A_k)^{-1}$ (see
\cite[Chapter~4]{stanley}). Therefore, we have:
\begin{equation}\label{Ek}
E_k = \frac{F_k}{F_{k+1}},
\end{equation}
where $F_k$ is the determinant of the matrix $1 - A_{k-1}$, with the
convention $F_0 = 1$.

As the entry $(i,j)$ of the matrix $1 - A_{k-1}$ is $\beta_{j-i}$, we have, by
the definition of the determinant:
\begin{equation*}
F_k = \sum_{\sigma\in\mathfrak S_k} \epsilon(\sigma)
\prod_{i=0}^{k-1} \beta_{\sigma(i)-i}.
\end{equation*}

In the following, we use this expression to compute $F_k$. The
permutations $\sigma$ that it involves are, of course, bijections from
$\ldb0,k-1\rdb$ to itself; however, we find it more convenient to regard them
as bijections from $\naturals$ to itself that fix all points in
$\naturals_{\ge k}$.

\begin{definition}
Let $I$ be an $a$-subset of the set $\ldb -b,a-1\rdb$. We call
\emph{$I$-permutation} of order $k$ a bijection from
$\naturals$ to $I\cup\naturals_{\ge a}$ that fixes all points in
$\naturals_{\ge k}$.
\end{definition}

Note that for a $I$-permutation to exist when $k < a$, all points in $\ldb
k,a-1\rdb$ must be in the set $I$ since they are fixed points. Also note that
a $\ldb0,a-1\rdb$-permutation of order $k$ is the same as a standard
permutation of order $k$; for this reason, we set $I_0 = \ldb0,a-1\rdb$.

We denote by \smash{$\mathfrak S_k^{(I)}$} the set of $I$-permutations of
order $k$. Let $\sigma$ be a $I$-permutation. We define the number of
\emph{inversions} of $\sigma$, denoted by $\inv(\sigma)$, in the same manner
as a regular permutation:
\[\inv(\sigma) =
\#\bigl\{(i,j)\mid i < j\text{ and }\sigma(i) > \sigma(j)\bigr\}.\]
We define the \emph{signature} of $\sigma$, denoted by $\epsilon(\sigma)$, to
be the number $(-1)^{\inv(\sigma)}$. We also define the quantity
$\beta(\sigma)$ to be:
\[\beta(\sigma) = \epsilon(\sigma)\prod_{i=0}^{k-1}\beta_{\sigma(i)-i}.\]
Finally, we call \emph{head} of $\sigma$ the value $\sigma(0)$; we call
\emph{tail} of $\sigma$ the mapping $\tau$ defined for all $n\in\naturals$ by:
\[\tau(n) = \sigma(n+1) - 1.\]

We now denote by $\mathbf F_k$ the vector indexed by the $a$-subsets of
$\ldb-b,a-1\rdb$ and the entries of which are:
\begin{equation*}%\label{FkI}
\mathbf F_k[I] = \sum_{\sigma\in\mathfrak S_k^{(I)}} \beta(\sigma).
\end{equation*}
The above remark means that the entry $I_0$ of this vector coincides with
$F_k$.

We compute the vector $\mathbf F_k$ using a simple transfer matrix method, in
a manner similar to \cite[Example~4.7.7 and Proposition~4.7.8a]{stanley}. If
$I$ is a subset of $\ldb-b,a-1\rdb$ and $s$ an integer, we denote by
$\epsilon_s(I)$ the number $-1$ to the power of the number of elements of $I$
lower than~$s$:
\[\epsilon_j(I) = (-1)^{\#\{i\,\in\,I\,\mid\,i\,<\,s\}}.\]
We denote by $\mathbf T$ the matrix whose rows and columns are indexed by the
$a$-subsets of $\ldb-b,a-1\rdb$ and the entries of which are:
\begin{equation}\label{T}
\mathbf T[I,J] = \begin{cases}
\epsilon_s(I)\beta_s&\text{if }I\cup\{a\} = (J+1)\cup\{s\}\text,\\
0&\text{otherwise.}
\end{cases}
\end{equation}

Figure~\ref{fig-T} shows two instances of this matrix. The first corresponds
to the paths with set of steps $S = \{0,\pm1\}$ (Motzkin paths); the second
corresponds to $S = \{0,\pm1,\pm2\}$ (this is known as the \emph{basketball}
problem, and studied in \cite{ayyer}). In both cases, we prefer to represent
the graph $\mathcal G$, the vertices of which are the $a$-subsets of
$\ldb-b,a-1\rdb$ and the adjacency matrix of which is $\mathbf T$.

% Since the
% polynomial $F_k$ is the entry $(I_0,I_0)$ in the matrix $\mathbf T^k$, it is
% equal to the generating function of walks of length~$k$ from $I_0$ to $I_0$ in
% the graph $\mathcal G$.

\begin{figure}[htbp]
\begin{center}
%\input T0.tex
%\\[.5cm]
%\begin{tikzpicture}[scale=2,baseline=0cm]
%\draw (-1.3,0) -- (3.6,0);
%\end{tikzpicture}\\[.5cm]
\begin{tikzpicture}[scale=2,>=latex,baseline=0cm]
\tikzstyle{state}+=[inner sep=0pt]
\tikzstyle{every edge}+=[->]
\begin{scope}[yshift=2cm]
\node (10) at (0,0) [state,accepting] {\n\y};
\node (01) at (1,0) [state] {\y\n};
\path (10) edge [loop left] node [left] {$\beta_0$} ();
\path (10) edge [bend left=15] node [above] {$-\beta_1$} (01);
\path (01) edge [bend left=15] node [below] {$\beta_{-1}$} (10);
\end{scope}
\draw (-2.3,1.5) -- ++(4.6,0);
\node (1100) at (0,0) [state,accepting] {\n\n\y\y};
\node (1001) at (-1,-1) [state] {\n\y\y\n};
\node (0110) at (-1,1) [state] {\y\n\n\y};
\node (0011) at (-2,0) [state] {\y\y\n\n};
\node (1010) at (1,0) [state] {\n\y\n\y};
\node (0101) at (2,0) [state] {\y\n\y\n};
\path (1100) edge node [below right] {$\beta_2$} (1001);
\path (1001) edge node [below left] {$\beta_2$} (0011);
\path (0011) edge node [above left] {$\beta_{-2}$} (0110);
\path (0110) edge node [above right] {$\beta_{-2}$} (1100);
\path (1100) edge [loop left] node [left] {$\beta_0$} ();
\path (1001) edge node [left] {$-\beta_0$} (0110);
\path (1100) edge [bend left=15] node [above] {$-\beta_1$} (1010);
\path (1010) edge [bend left=15] node [below] {$\beta_{-1}$} (1100);
\path (1010) edge [bend right=30] node [above right] {$-\beta_1$} (0110);
\path (1001) edge [bend right=30] node [below right] {$\beta_{-1}$} (1010);
\path (1010) edge [bend left=15] node [above] {$\beta_2$} (0101);
\path (0101) edge [bend left=15] node [below] {$\beta_{-2}$} (1010);
\end{tikzpicture}%
\end{center}
\caption{The graph $\mathcal G$ for the sets of steps $S = \{0,\pm1\}$ (above)
and $S = \{0,\pm1,\pm2\}$ (below). In each case, the subset $I$ of
$\ldb-b,a-1\rdb$ corresponding to each vertex is represented by a sequence of
$\y$ and $\n$ (\textit{e.g.} $\n\y\n\y$ corresponds to the subset $\{-1,1\}$
of $\ldb-2,1\rdb$). The vertex corresponding to the set $I_0$ is colored
gray.}
\label{fig-T}
\end{figure}

To help the reader familiarise himself with this definition, we start by
stating two basic properties of the matrix~$\mathbf T$. We make use of them
later.

\begin{lemma}\label{lem1}
Let $I$ and $J$ be $a$-subsets of $\ldb-b,a-1\rdb$ and assume that $\mathbf
T[I,J]\ne0$. The two following implications hold.
\begin{enumerate}
\item If $-b$ is in $I$, then $J$ is such that $I\cup\{a\} = (J+1)\cup\{-b\}$.
In this case, we have $\mathbf T[I,J] = \beta_{-b}$.
\item If $a-1$ is not in $J$, then $I$ is equal to $J+1$. In this case, we
have $\mathbf T[I,J] = (-1)^a\beta_a$.
\end{enumerate}
\end{lemma}

We can check this lemma by looking at Figure~\ref{fig-T}: all vertices with a
label starting with a $\y$ have only one outgoing arc, with
label~$\beta_{-b}$; all vertices with a label ending with a $\n$ have only one
ingoing arc, with label~$(-1)^a\beta_a$.

\begin{proof}
By the definition of the matrix $\mathbf T$, if $I$ and $J$ are such that
$\mathbf T[I,J]\ne0$, we have $I\cup\{a\} = (J+1)\cup\{s\}$ for some $s$ in
$\ldb-b,a-1\rdb$.

Let us prove the first implication. As $J$ is a subset of $\ldb-b,a-1\rdb$,
the point $-b$ cannot be in $J+1$. Therefore, if $-b$ is in $I$, we have $s =
-b$. To prove the second implication, we note that the point $a$ is obviously
in $I\cup\{a\}$ and therefore in $(J+1)\cup\{s\}$. Therefore, if $a-1$ is not
in $J$, we have $s = a$.
\end{proof}

We now define some particular vertices of the graph $\mathcal G$, defined as
cyclic permutations of $I_0$. Specifically, if $m$ is such that $-b\le m\le
a$, let $I_m$ be the vertex of $\mathcal G$ defined as the set $\ldb
m,m+a-1\rdb$ modulo $a+b$, where the values modulo $a+b$ are taken in
$\ldb-b,a-1\rdb$. Since $I_{-b} = I_a$, these vertices number $a+b$.

\begin{lemma}\label{lem2}
If $0<m\le a$, any walk in the graph $\mathcal G$ going from $I_m$ visits
$I_0$. If $-b\le m < 0$, any walk in the graph $\mathcal G$ going backwards
from $I_m$ visits $I_0$.
\end{lemma}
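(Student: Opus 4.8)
The plan is to show that, for $0<m\le a$, the vertex $I_m$ has a \emph{single} outgoing arc in $\mathcal G$ and that it leads to $I_{m-1}$ (reaching $I_0$ when $m=1$); dually, for $-b\le m<0$, the vertex $I_m$ has a single \emph{ingoing} arc, coming from $I_{m+1}$ (from $I_0$ when $m=-1$). Once this is known, a straightforward induction forces every walk issued from $I_m$ to run along $I_m\to I_{m-1}\to\dotsb\to I_1\to I_0$, and symmetrically for the backward walks, which is exactly the statement of the lemma. Thus the whole argument reduces to making the sets $I_m$ explicit and invoking Lemma~\ref{lem1}.

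First I would compute $I_m$ by hand. For $0<m\le a$, reducing the window $\ldb m,m+a-1\rdb$ modulo $a+b$ into $\ldb-b,a-1\rdb$ leaves the elements of $\ldb m,a-1\rdb$ fixed and sends each element of $\ldb a,m+a-1\rdb$ to itself minus $a+b$, so that $I_m=\ldb-b,m-b-1\rdb\cup\ldb m,a-1\rdb$. In particular $-b\in I_m$. By the first implication of Lemma~\ref{lem1}, the vertex $I_m$ then has a unique outgoing arc, of nonzero weight $\beta_{-b}$, toward the vertex $J$ determined by $J+1=(I_m\cup\{a\})\setminus\{-b\}$. A short computation gives $J=\ldb-b,m-b-2\rdb\cup\ldb m-1,a-1\rdb$, which is $I_{m-1}$ for $m\ge2$ and $\ldb0,a-1\rdb=I_0$ for $m=1$. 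This settles the forward claim.

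The backward direction is dual. For $-b\le m<0$ the window $\ldb m,m+a-1\rdb$ already lies inside $\ldb-b,a-1\rdb$, its largest element being $m+a-1\le a-2$, so no reduction occurs and $I_m=\ldb m,m+a-1\rdb$; in particular $a-1\notin I_m$. By the second implication of Lemma~\ref{lem1}, the vertex $I_m$ then has a unique ingoing arc, of nonzero weight $(-1)^a\beta_a$, coming from $I_m+1=\ldb m+1,m+a\rdb$. This set equals $I_{m+1}$ for $m<-1$ and equals $\ldb0,a-1\rdb=I_0$ for $m=-1$. Reading the arcs backwards therefore forces any walk to travel along $I_m\to I_{m+1}\to\dotsb\to I_{-1}\to I_0$, which is the backward claim.

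The nonzeroness of the relevant weights is immediate, since $\beta_{-b}=-\omega_{-b}$ and $\beta_a=-\omega_a$ with $-b=\min S$ and $a=\max S$ both in $S$; hence at each step the forced arc genuinely exists and is the only one. I expect the only delicate point to be the index bookkeeping in the modular reduction defining $I_m$ and the verification that the computed successor $J$ (respectively predecessor $I_m+1$) coincides with the next special vertex in the chain. This is routine but slightly fiddly, and it is where I would be most careful.
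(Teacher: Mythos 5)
Your proof is correct and follows essentially the same route as the paper: in both cases the key observation is that $-b\in I_m$ for $0<m\le a$ (resp.\ $a-1\notin I_m$ for $-b\le m<0$), so Lemma~\ref{lem1} forces a unique outgoing (resp.\ ingoing) arc leading to $I_{m-1}$ (resp.\ coming from $I_{m+1}$), and the walk is channelled to $I_0$. The only difference is that you carry out explicitly the modular computation of $I_m$ and of the forced neighbour, which the paper leaves implicit; that extra care is welcome but does not change the argument.
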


\begin{proof}
The first result stems from the fact that $-b$ is in $I_m$ if $0<m\le a$.
Lemma~\ref{lem1} entails that the only arc of $\mathcal G$ going from $I_m$
goes to $I_{m-1}$. Thus, any walk going from $I_m$ reaches $I_0$ in $m$ steps.

Likewise, the second results stems from the fact that $a-1$ is not in $I_m$ if
$-b\le m < 0$. Lemma~\ref{lem1} entails that the only arc of $\mathcal G$
going to $I_m$ goes from $I_{m+1}$. This means that any walk going backwards
from $I_m$ reaches $I_0$.
\end{proof}

\begin{proposition}\label{FkT}
The vectors $\mathbf F_k$ satisfy for $k\ge0$:
\[\mathbf F_{k+1} = \mathbf T\mathbf F_k.\]
\end{proposition}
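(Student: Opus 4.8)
The plan is to prove the identity entrywise, that is, to show $\mathbf F_{k+1}[I] = \sum_J \mathbf T[I,J]\,\mathbf F_k[J]$ for every $a$-subset $I$ of $\ldb -b,a-1\rdb$, by setting up a weight-preserving bijection between the $I$-permutations of order $k+1$ and the pairs consisting of a head together with a $J$-permutation of order $k$. The natural device is the head--tail decomposition already introduced: to an $I$-permutation $\sigma$ of order $k+1$ I associate its head $s=\sigma(0)$ and its tail $\tau$, given by $\tau(n)=\sigma(n+1)-1$.

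First I would identify the range of the tail. Since $\sigma$ is a bijection from $\naturals$ onto $I\cup\naturals_{\ge a}$ fixing every point $\ge k+1$, the map $\tau$ fixes every point $\ge k$ and is a bijection from $\naturals$ onto $\bigl((I\cup\naturals_{\ge a})\setminus\{s\}\bigr)-1$. A direct computation rewrites this range as $J\cup\naturals_{\ge a}$ precisely when $(J+1)=(I\cup\{a\})\setminus\{s\}$, that is, when $I\cup\{a\}=(J+1)\cup\{s\}$ --- the very relation appearing in the definition~\eqref{T} of $\mathbf T$. Conversely, given such a pair $(s,J)$ and any $J$-permutation $\tau$ of order $k$, the formulas $\sigma(0)=s$ and $\sigma(n+1)=\tau(n)+1$ rebuild an $I$-permutation of order $k+1$, so $\sigma\mapsto(s,\tau)$ is a bijection onto this collection of pairs. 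I would also observe that whenever $\beta(\sigma)\ne0$ the head satisfies $s\le a$ and $s\in I\cup\{a\}$: any factor $\beta_{\sigma(i)-i}$ vanishes unless $\sigma(i)-i\in\ldb -b,a\rdb$, which forces $s=\sigma(0)\in\ldb -b,a\rdb$ and, when $-b\in I$, forces the value $-b$ to be attained at index $0$ (so $s=-b$, in accordance with Lemma~\ref{lem1}). Hence the heads producing no admissible $J$ carry weight zero and may be discarded from the sum.

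Next I would factor the weight. Splitting off the $i=0$ term gives $\prod_{i=0}^{k}\beta_{\sigma(i)-i}=\beta_s\prod_{j=0}^{k-1}\beta_{\tau(j)-j}$, using $\sigma(i)-i=\tau(i-1)-(i-1)$ for $i\ge1$. It then remains to compare signatures. The inversions of $\sigma$ involving two indices $\ge1$ are in bijection with the inversions of $\tau$, since the uniform shift of values by $1$ preserves order, while the inversions with first index $0$ are the pairs $(0,j)$ with $\sigma(j)<s$; their number equals $\#\{i\in I:i<s\}$, because no value $\ge a$ is smaller than $s\le a$. Therefore $\inv(\sigma)=\inv(\tau)+\#\{i\in I:i<s\}$, whence $\epsilon(\sigma)=\epsilon_s(I)\,\epsilon(\tau)$ and $\beta(\sigma)=\epsilon_s(I)\beta_s\,\beta(\tau)=\mathbf T[I,J]\,\beta(\tau)$. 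Summing over all $I$-permutations of order $k+1$, grouping them by their head $s$ (equivalently by the unique $J$ with $\mathbf T[I,J]\ne0$), and invoking the bijection yields $\mathbf F_{k+1}[I]=\sum_J \mathbf T[I,J]\sum_{\tau\in\mathfrak S_k^{(J)}}\beta(\tau)=\sum_J \mathbf T[I,J]\,\mathbf F_k[J]$, which is the claim.

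The range computation is routine bookkeeping; the step I expect to be most delicate is the signature comparison, where one must correctly account both for the uniform shift of the tail's values and for the inversions created by inserting the head, and simultaneously check that the count of small values reduces exactly to $\#\{i\in I:i<s\}$ (this is where $s\le a$ is used). A secondary point requiring care is the treatment of heads that yield no admissible $J$: one must confirm that these always carry weight zero, so that the restricted sum genuinely matches the matrix product.
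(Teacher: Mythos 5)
Your proof is correct and follows essentially the same route as the paper's: the head--tail decomposition $\sigma\mapsto(s,\tau)$, the identification of the tail's range with $J\cup\naturals_{\ge a}$ where $I\cup\{a\}=(J+1)\cup\{s\}$, and the weight factorization $\beta(\sigma)=\epsilon_s(I)\beta_s\beta(\tau)$ via the inversion count at index $0$. You merely spell out a few points the paper leaves implicit (the explicit inverse of the decomposition and the vanishing of the weight for heads with no admissible $J$), which the paper handles by noting that $\beta(\tau)\ne0$ forces $-b-1\notin J$.
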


\begin{proof}
To prove the proposition, we show that for every $a$-subset $I$ of
$\ldb-b,a-1\rdb$, we have:
\[\mathbf F_{k+1}[I] = \sum_J\mathbf T[I,J]\mathbf F_k[J].\]
To do this, we let $\sigma$ be a $I$-permutation of order $k+1$ such that
$\beta(\sigma)\ne0$. Let $s = \sigma(0)$ be the head of $\sigma$ and $\tau$ be
its tail, defined above. Since the number of inversions created by $0$ in the
permutation $\sigma$ is equal to the number of elements of $I$ lower than $s$,
we have:
\[\beta(\sigma) = \epsilon_s(I)\beta_s\beta(\tau).\]

It remains to show that $\tau$ is a $J$-permutation, where
$I\cup\{a\}=(J+1)\cup\{s\}$. Since $\beta_s\ne0$, we have $s\in\ldb-b,a\rdb$.
Thus, we may write:
\[\sigma\colon\naturals\to
\{s\}\cup\bigl(I\cup\{a\}\setminus\{s\}\bigr)\cup\naturals_{\ge a+1}.\]
From this, we deduce:
\[\tau\colon\naturals\to J\cup\naturals_{\ge a}.\]
From its definition, the set $J$ is \textit{a priori} an $a$-subset of
$\ldb-b-1,a-1\rdb$; however, since $\beta(\tau)\ne0$, the point $-b-1$ cannot be
in $J$. This finishes the proof.
\end{proof}

From Proposition~\ref{FkT}, we derive the following theorem, which already
appears in \cite{bousquet} but is found using completely different methods.

\begin{theorem}\label{Fk}
The generating function of the polynomials $F_k$ is:
\[\sum_{k\ge0}F_kz^k = \frac{N(z)}{D(z)}\text,\]
where $D(z)$ is the determinant of $1 - z\mathbf T$ and $N(z)$ is the
$(I_0,I_0)$ cofactor of the same matrix.

Moreover, the degree of the polynomial $D(z)$ is \smash{$\binom{a+b}{a}$}; the
degree of $N(z)$ is \smash{$\binom{a+b}{a} - a - b$}.
\end{theorem}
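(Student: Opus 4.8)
The plan is to split the statement into the generating-function identity and the two degree computations. For the identity, I would first identify the initial vector: an $I$-permutation of order $0$ must fix every point of $\naturals$, hence be the identity, which forces $I=I_0$; therefore $\mathbf F_0$ is the basis vector $\mathbf e_{I_0}$. Iterating Proposition~\ref{FkT} gives $\mathbf F_k=\mathbf T^k\mathbf e_{I_0}$, so that
\[\sum_{k\ge0}F_kz^k=\mathbf e_{I_0}^{\mathsf T}\Bigl(\sum_{k\ge0}z^k\mathbf T^k\Bigr)\mathbf e_{I_0}=\bigl[(1-z\mathbf T)^{-1}\bigr]_{I_0,I_0}.\]
The adjugate formula $\bigl[(1-z\mathbf T)^{-1}\bigr]_{I_0,I_0}=\cof_{I_0,I_0}(1-z\mathbf T)/\det(1-z\mathbf T)$ then yields $N(z)/D(z)$, since a diagonal cofactor carries sign $+1$ and so equals the minor obtained by deleting row and column $I_0$.

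For the degree of $D$, write $d=\binom{a+b}{a}$ for the size of $\mathbf T$. Since $D(z)=\det(1-z\mathbf T)$ has leading coefficient $(-1)^d\det\mathbf T$, it suffices to prove $\det\mathbf T\ne0$. Here I would specialize the weights so that $\beta_s=0$ for every $s\notin\{-b,a\}$, keeping $\beta_{-b}$ and $\beta_a$ formal. Lemma~\ref{lem1} shows that, after this specialization, each vertex $I$ keeps exactly one outgoing arc (the arc of weight $\beta_{-b}$ if $-b\in I$, and otherwise the arc $I\to I-1$ of weight $(-1)^a\beta_a$) and, symmetrically, exactly one incoming arc; hence $\mathbf T$ becomes a weighted permutation matrix attached to a permutation $\pi_0$ of the $a$-subsets. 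Its determinant is the single monomial $\pm\beta_{-b}^{\binom{a+b-1}{a-1}}\beta_a^{\binom{a+b-1}{a}}$, which is nonzero; therefore $\det\mathbf T\ne0$ as a polynomial in the weights and $\deg D=d$.

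For the degree of $N$, I would read off coefficients through the cycle expansion of the characteristic polynomial: for any weighted digraph with adjacency matrix $A$, the $z$-degree of $\det(1-zA)$ equals the maximal number of vertices that can be covered by a vertex-disjoint union of directed cycles. Applying this to $N(z)=\det(1-z\mathbf T')$, where $\mathbf T'$ is $\mathbf T$ with row and column $I_0$ removed, Lemma~\ref{lem2} gives the upper bound: each of the $a+b-1$ special vertices $I_m$ with $m\ne0$ lies on no directed cycle of $\mathcal G\setminus\{I_0\}$, since a cycle through such a vertex would be a forward or backward closed walk avoiding $I_0$, which Lemma~\ref{lem2} forbids. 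Hence at most $(d-1)-(a+b-1)=d-a-b$ of the remaining vertices can be covered, whence $\deg N\le d-a-b$. For the matching lower bound I would reuse $\pi_0$: its cycle through the special vertices is exactly $I_a\to I_{a-1}\to\cdots\to I_0\to I_{-1}\to\cdots\to I_{-b}=I_a$, of length $a+b$, and removing $I_0$ leaves the other cycles of $\pi_0$ intact, covering precisely the $d-a-b$ non-special vertices. Specializing again to $\beta_{-b},\beta_a$ turns $\mathbf T'$ into a weighted partial permutation whose only cycles are these survivors, so the coefficient of $z^{d-a-b}$ in $N(z)$ specializes to a nonzero monomial and is therefore a nonzero polynomial; this gives $\deg N=d-a-b$.

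The routine parts are the initial-vector computation and the adjugate formula; the genuine content, and the main obstacle, is proving the \emph{exact} degrees rather than mere upper bounds, that is, that the relevant top coefficients of $\det\mathbf T$ and of the cofactor do not cancel. The device that resolves both is the specialization to the two extreme weights $\beta_{-b}$ and $\beta_a$, under which Lemma~\ref{lem1} collapses $\mathbf T$ to a weighted permutation matrix and isolates a single surviving monomial; verifying that the permutation $\pi_0$ has the cycle described by Lemma~\ref{lem2} as one of its cycles is the one point that needs care.
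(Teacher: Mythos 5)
Your overall route is the paper's: identify $\mathbf F_0=\mathbf e_{I_0}$, iterate Proposition~\ref{FkT} to get $F_k=[\mathbf T^k]_{I_0,I_0}$, apply Cramer's rule, bound $\deg D$ by the size of $\mathbf T$, bound $\deg N$ via the cycle-cover interpretation together with Lemma~\ref{lem2}, and exhibit the extremal configuration coming from the cyclic vertices $I_m$. All of that is sound, and your explicit treatment of the lower bound for $\deg N$ (actually ruling out cancellation among configurations covering $d-a-b$ vertices) is if anything more careful than the paper's, which stops after exhibiting one such configuration.

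The one step that fails is the specialization ``$\beta_s=0$ for every $s\notin\{-b,a\}$''. By definition $\beta_0=1-\omega_0$ if $0\in S$ and $\beta_0=1$ if $0\notin S$; in the latter case --- which includes ordinary Dyck paths $S=\{\pm1\}$ --- no choice of weights kills $\beta_0$, and the arcs of weight $\pm\beta_0=\pm1$ survive. Concretely, for $S=\{\pm1\}$ the vertex $\{0\}$ keeps a loop of weight $1$ in addition to its arc to $\{-1\}$, so $\mathbf T$ does not collapse to a weighted permutation matrix and ``its determinant is a single monomial'' does not follow as stated. The idea is salvageable: instead of specializing, extract the coefficient of $\omega_{-b}^{\binom{a+b-1}{a-1}}\omega_a^{\binom{a+b-1}{a}}$ in $\det(\mathbf T)$; since this monomial has total degree $d$ and each entry of $\mathbf T$ has degree at most $1$ in the $\omega$'s, only permutations using exclusively $\beta_{-b}$- and $\beta_a$-arcs contribute, and your ``one admissible outgoing arc per vertex'' observation then forces a unique permutation. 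At that point you have essentially reconstructed the paper's own argument, which uses Lemma~\ref{lem1} and a count of the subsets containing $-b$ (respectively $a-1$) to show that only one permutation contributes at all, giving the exact value $\det(\mathbf T)=\pm\beta_{-b}^{\binom{a+b-1}{a-1}}\beta_a^{\binom{a+b-1}{a}}$. The same repair is needed where you specialize again for the coefficient of $z^{d-a-b}$ in $N(z)$. A secondary caveat: even where legal, specialization only proves nonvanishing of the top coefficients for algebraically independent weights, whereas the paper's exact evaluation gives it whenever $\omega_{-b}\omega_a\neq0$.
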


\begin{proof}
The only possible $I$-permutation of order $0$ is the identity, which is an
$I_0$-permutation. This implies that the polynomial \smash{$F_0^{(I)}$} is $1$
if $I = I_0$ and $0$ otherwise. With Proposition~\ref{FkT}, we deduce that the
polynomial $F_k$ is equal to the entry $(I_0,I_0)$ in the matrix $\mathbf
T^k$.

The generating function \smash{$\sum_{k\ge0}F_kz^k$} is therefore equal
to the entry $(I_0,I_0)$ in the matrix $(1 - z\mathbf T)^{-1}$. The announced
form follows from Cramer's rule.

To compute the degree of the polynomial $D(z)$, we let \smash{$d =
\binom{a+b}{a}$}. Since $\mathbf T$ is a $d\times d$ matrix, the polynomial
$D(z)$ has degree at most $d$; the coefficient of $z^d$ in this polynomial is
(up to a sign) $\det(\mathbf T)$. Denoting by $\mathfrak S_d$ the set of
permutations of the set of $a$-subsets of $\ldb-b,a-1\rdb$, we have:
\[\det(\mathbf T) = \sum_{\pi\in\mathfrak S_d}\epsilon(\pi)\prod_I\mathbf
T[I,\pi(I)].\]
Let $\pi$ be a permutation with a nonzero contribution in this sum.
Lemma~\ref{lem1} asserts that:
\begin{enumerate}
\item if $-b$ is in $I$, then $I\cup\{a\} = \bigl(\pi(I)+1\bigr)\cup\{-b\}$;
\item if $a-1$ is not in $\pi(I)$, then $I = \pi(I)+1$.
\end{enumerate}
Condition~1 determines $\pi(I)$ if $-b$ is in~$I$; Condition~2 determines
$\pi(I)$ if $-b$ is not in~$I$. The permutation $\pi$ is thus uniquely
determined. Moreover, the values of $\mathbf T[I,\pi(I)]$ are given by
Lemma~\ref{lem1}. This gives, up to a sign, the value of $\det(\mathbf T)$:
\[\det(\mathbf T) =
\pm\bigl.\beta_{-b}\bigr.^{\textstyle\binom{a+b-1}{a-1}}
\bigl.\beta_a\bigr.^{\textstyle\binom{a+b-1}{a}},\]
which is nonzero. Therefore, the polynomial $D(z)$ has degree $d$.

To compute the degree of $N(z)$, we adopt a more combinatorial point of view:
we regard the determinant $D(z)$ as the generating function of
\emph{configurations of cycles} of the graph~$\mathcal G$, counted up to a
sign, where $z$ accounts for the number of vertices visited by the
configuration. Let $\pi_0$ be the unique permutation, defined above, that
contributes to the dominant term of $D(z)$; the permutation $\pi_0$ can be
interpreted as the only configuration of cycles that visits all vertices
of~$\mathcal G$.

% todo: citation
The cofactor $N(z)$ is the generating function of the configurations of cycles
that avoid the vertex~$I_0$. By Lemma~\ref{lem2}, such a configuration cannot
visit any of the vertices $I_m$ (since if it would, it would also visit
$I_0$). This means that $N(z)$ has degree at most $d - a - b$. Moreover, we
easily check that the vertices $I_m$ form a cycle of the configuration
$\pi_0$; by removing this cycle, we thus obtain a configuration of cycles
visiting $d - a - b$ vertices and not visiting $I_0$. Therefore, the degree of
$N(z)$ is exactly $d - a - b$.
\end{proof}

As examples, let us compute the polynomials $D(z)$ and $N(z)$ corresponding to
the two examples of Figure~\ref{fig-T}. In the case of Motzkin paths $S =
\{0,\pm1\}$, let us take the weights $\omega_0 = 0$ and $\omega_1 =
\omega_{-1} = t$ (\textit{i.e.}, the case of standard Dyck paths).
Theorem~\ref{Fk} shows that the polynomials $F_k$ satisfy:
\[\sum_{k\ge0}F_kz^k = \frac1{1 - z + t^2z^2},\]
which is equivalent the recurrence relation:
\begin{align*}
F_0 &= F_1 = 1,\\
F_k &= F_{k-1} - t^2F_{k-2}&\text{ if }k\ge2.
\end{align*}
These polynomials are commonly known as the \emph{Fibonacci polynomials}, due
to the similarities with the recurrence relation of the Fibonacci numbers.
% The links between Fibonacci polynomials and Dyck paths are well-known.
% todo: citation

In the basketball case $S = \{0,\pm1,\pm2\}$, let us take the weights
$\omega_0 = 0$, $\omega_1 = \omega_{-1} = t_1$ and $\omega_2 = \omega_{-2} =
t_2$. In this case, the polynomials $D(z)$ and $N(z)$ are:
\begin{align*}
D(z) &= (1 + t_2z)^2(1 - z - 2t_2z + t_1^2 z^2 + 2t_2z^2 + 2t_2^2z^2 -
t_2^2z^3 - 2t_2^3z^3 + t_2^4z^4);\\
N(z) &= (1 + t_2z)(1 - t_2z).
\end{align*}
As we can see, a simplification by a factor of $1 + t_2z$ occurs in the
computation of the fraction $N(z)/D(z)$. This implies that the polynomials
$F_k$ follow a linear recurrence relation of order~$5$ instead of~$6$, as
shown in \cite{ayyer}. The factorisation of the polynomial $D(z)$ is also
predicted in \cite{bousquet}; it is linked to the fact that the set $S$ and
the weights $\beta_s$ are \emph{symmetric}.

\section{Bounded meanders}\label{meanders}

We enumerate bounded meanders in the same manner as bounded excursions.
Specifically, the generating function \smash{$E_{k,\ell}$} counts walks from
$0$ to $\ell$ in the graph described by the matrix $A_k$. Therefore, we
have
\begin{equation}\label{Ekl}
E_{k,\ell} = \frac{F_{k,\ell}}{F_{k+1}},
\end{equation}
where $F_{k,\ell}$ is the $(\ell,0)$ cofactor of the matrix $1 - A_k$.
Obviously, we have $F_{k,0} = F_k$.

We again compute the cofactors $F_{k,\ell}$ using a transfer matrix
method. We start by writing $F_{k,\ell}$ in terms of the determinant of the
matrix $1 - A_k$ with the $\ell$th row and the $0$th column cut. This
reads:
\begin{align*}
F_{k,\ell} &= (-1)^\ell\det\Biggl(\begin{closedcases}
\beta_{j-i+1}&\text{if $i < \ell$}\\
\beta_{j-i}&\text{if $i\ge\ell$}
\end{closedcases}\Biggr)_{0\le i,j\le k-1}\\
&= (-1)^\ell\sum_{\sigma\in\mathfrak S_k} \epsilon(\sigma)
\prod_{i=0}^{\ell-1}\beta_{\sigma(i)-i+1}
\prod_{i=\ell}^{k-1}\beta_{\sigma(i)-i}.
\end{align*}
For every integer~$s$, we set \smash{$\widetilde\beta_s = -\beta_{s+1}$}. We
also define, for every permutation $\sigma$ of order~$k$, the quantity:
\[\beta_\ell(\sigma) = \epsilon(\sigma)
\prod_{i=0}^{\ell-1}\widetilde\beta_{i-\sigma(i)}
\prod_{i=\ell}^{k-1}\beta_{i-\sigma(i)}.\]
We rewrite the above formula as:
\[F_{k,\ell} = \sum_{\sigma\in\mathfrak S_k} \beta_\ell(\sigma).\]
% \sum_{\substack{\sigma\in\mathfrak S_{k+1}\\\sigma(\ell)=0}}
% \epsilon(\sigma)\prod_{\substack{0\le i\le k\\i\ne\ell}}\beta_{i-\sigma(i)}.\]
Let now $I$ be an $a-1$-subset of the set $\ldb-b-1,a-2\rdb$. Let
\smash{$\widetilde{\mathfrak S}_k^{(I)}$} be the set of $I$-permutations of
order~$k$ with respect to the set \smash{$\widetilde S = S-1$}. We define the
following polynomial:
\begin{equation*}
F_{k,\ell}^{(I)} =
\sum_{\sigma\in\widetilde{\mathfrak S}_k^{(I)}} \beta_\ell(\sigma).
\end{equation*}
We also denote by $\mathbf F_{k,\ell}$ the vector whose $I$-component is
\smash{$F_{k,\ell}^{(I)}$}.

% Let
% \smash{$\widetilde{\mathbf F}_k$} be the vector defined in
% Section~\ref{excursions} associated to \smash{$\widetilde S$} and the
% \smash{$\widetilde\beta_j$}'s (the components of this vector are thus indexed
% by the $b+1$-subsets of $\ldb-b-1,a-2\rdb$).

Let \smash{$\widetilde{\mathbf T}$} be the transfer matrix defined in
Section~\ref{excursions} corresponding to the set \smash{$\widetilde S$} and
the weights \smash{$\widetilde\beta_s$}. Let $\mathbf U$ be the matrix whose
rows are indexed by the $a-1$-subsets of $\ldb-b-1,a-2\rdb$, whose columns are
indexed by the $a$-subsets of $\ldb-b,a-1\rdb$ and whose entries are:
\begin{equation}\label{U}
\mathbf U[I,J] =
\begin{cases}
1&\text{if }I\cup\{a-1\} = J,\\
0&\text{otherwise.}
\end{cases}
\end{equation}

We also let \smash{$\widetilde{\mathcal G}$} be the graph with adjacency
matrix \smash{$\widetilde{\mathbf T}$}. Let $\mathcal H$ be the graph
\smash{$\mathcal G\cup\widetilde{\mathcal G}$}, with additional arcs between
the vertices of $\mathcal G$ and \smash{$\widetilde{\mathcal G}$} coded by the
matrix $\mathbf U$. Examples are shown in Figure~\ref{fig-U}.

\begin{proposition}\label{FklT}
The vectors $\mathbf F_{k,\ell}$ satisfy for
$k\ge\ell\ge0$:
\begin{align*}
\mathbf F_{k+1,\ell+1} &= \widetilde{\mathbf T}\mathbf F_{k,\ell}\text;\\
\mathbf F_{k,0} &= \mathbf U\mathbf F_k\text.
\end{align*}
\end{proposition}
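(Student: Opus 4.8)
The plan is to prove the two recurrences by the same head/tail decomposition that established Proposition~\ref{FkT}, tracking how the extra factor at position~$0$ behaves under the modified weight~$\widetilde\beta_s$ and the shift $\widetilde S = S-1$.

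First I would establish the relation $\mathbf F_{k+1,\ell+1} = \widetilde{\mathbf T}\mathbf F_{k,\ell}$. For this I fix an $(a-1)$-subset $I$ of $\ldb-b-1,a-2\rdb$ and take $\sigma$ an $I$-permutation of order $k+1$ (with respect to $\widetilde S$) with $\beta_{\ell+1}(\sigma)\ne0$. As before I write $s = \sigma(0)$ and let $\tau$ be its tail. The key observation is that, since $\ell+1\ge1$, position~$0$ lies in the range where the factor $\widetilde\beta_{0-\sigma(0)} = \widetilde\beta_{-s}$ is used; moreover every subsequent index $i$ in $\ldb1,\ell\rdb$ for $\sigma$ corresponds to index $i-1$ in $\ldb0,\ell-1\rdb$ for $\tau$, and the weight $\widetilde\beta_{i-\sigma(i)} = \widetilde\beta_{(i-1)-\tau(i-1)}$ is preserved under the shift defining the tail. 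The remaining indices $i\ge\ell+1$ likewise match the $\beta$-factors of $\tau$ at indices $\ge\ell$. Collecting the sign from the inversions created by $0$, which equals $\epsilon_s(I)$ exactly as in Proposition~\ref{FkT}, I obtain $\beta_{\ell+1}(\sigma) = \widetilde{\mathbf T}[I,J]\,\beta_\ell(\tau)$ for the unique $J$ with $I\cup\{a-1\} = (J+1)\cup\{s\}$ (here $a-1 = \max\widetilde S$ plays the role of $a$). Summing over $\sigma$, and checking as in Proposition~\ref{FkT} that $\tau$ ranges over all $J$-permutations of order $k$, yields the claimed identity.

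Next I would prove $\mathbf F_{k,0} = \mathbf U\mathbf F_k$. Here $\ell=0$, so $\beta_0(\sigma) = \epsilon(\sigma)\prod_{i=0}^{k-1}\beta_{i-\sigma(i)}$ involves no $\widetilde\beta$-factors at all; this is precisely the quantity $\beta(\sigma)$ from Section~\ref{excursions} computed with the original weights. The only subtlety is that $\sigma$ now ranges over $I$-permutations with respect to $\widetilde S$, i.e.\ over $(a-1)$-subsets of $\ldb-b-1,a-2\rdb$, whereas $\mathbf F_k$ is indexed by $a$-subsets of $\ldb-b,a-1\rdb$. The matrix $\mathbf U$ bridges exactly this gap: an $I$-permutation with respect to $\widetilde S$ fixes all points $\ge k$ and maps onto $I\cup\naturals_{\ge a-1}$, but the original $\beta$-weights force the image set to behave as though the maximal step is $a$, so adjoining the point $a-1$ to $I$ reindexes the sum as a sum over ordinary $J$-permutations with $J = I\cup\{a-1\}$. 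Making this bijection between the two families of permutations explicit, and checking that $\beta_0(\sigma)$ equals $\beta(\sigma)$ under it, gives $F_{k,0}^{(I)} = \sum_J \mathbf U[I,J]\,\mathbf F_k[J]$.

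The main obstacle I anticipate is the bookkeeping for the second identity, namely pinning down precisely why adjoining $a-1$ to an $(a-1)$-subset of $\ldb-b-1,a-2\rdb$ produces an $a$-subset of $\ldb-b,a-1\rdb$ and why the resulting permutation is genuinely a standard $J$-permutation of order $k$ with the right weights. This requires carefully comparing the image sets $I\cup\naturals_{\ge a-1}$ for the $\widetilde S$-permutation against $J\cup\naturals_{\ge a}$ for the $S$-permutation, and verifying that no $\beta$-factor is lost or gained in the identification; the shift by one in both the step set and the index range must be reconciled exactly. The first identity is more routine, being a direct transcription of the argument for Proposition~\ref{FkT} with $\widetilde\beta$ and $\widetilde S$ in place of $\beta$ and $S$, so I would present it briefly and devote the bulk of the care to the $\mathbf U$ step.
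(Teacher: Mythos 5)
Your proposal follows essentially the same route as the paper: the first identity via the head/tail decomposition of Proposition~\ref{FkT} transposed to $\widetilde S$ and $\widetilde\beta$, and the second by observing that $\beta_0(\sigma)=\beta(\sigma)$ and that an $I$-permutation for $\widetilde S$ is literally a $J$-permutation for $S$ with $J=I\cup\{a-1\}$, since $I\cup\naturals_{\ge a-1}=J\cup\naturals_{\ge a}$. The one point you flag but leave open is closed in a line: $\beta(\sigma)\ne0$ forces every $\sigma(i)-i$ into $\ldb-b,a\rdb$, so $-b-1\notin I$ and $J$ is indeed an $a$-subset of $\ldb-b,a-1\rdb$.
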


\begin{proof}
The proof of the first identity follows the same lines as that of
Proposition~\ref{FkT}. Let $I$ be an $a-1$-subset of $\ldb-b-1,a-2\rdb$ and
$\sigma$ be in \smash{$\widetilde{\mathfrak S}_{k+1}^{(I)}$}. Let $s$ be the
head and $\tau$ be the tail of $\sigma$. We have:
\[\beta_{\ell+1}(\sigma)=\epsilon_s(I)\widetilde\beta_s\beta_\ell(\tau).\]
The rest of the proof is identical to that of Proposition~\ref{FkT}.

To prove the second identity, we let $I$ be an $a-1$-subset of
$\ldb-b-1,a-2\rdb$ and $\sigma$ be in \smash{$\widetilde{\mathfrak
S}_k^{(I)}$}. By definition, we have:
\[\beta_0(\sigma) = \beta(\sigma).\]
Moreover, assume that $\beta_0(\sigma) = \beta(\sigma) \ne0$. This means that
$-b-1$ cannot be in $I$. Therefore, the set $J = I\cup\{a-1\}$ is an
$a$-subset of $\ldb-b,a-1\rdb$. Again by definition, the mapping $\sigma$ is
in \smash{$\mathfrak S_k^{(J)}$}. This completes the proof.
\end{proof}

\begin{theorem}\label{Fkl}
The bivariate generating function of the polynomials $F_{k,\ell}$ satisfies:
\begin{equation*}
\sum_{k\ge\ell\ge 0} F_{k,\ell}u^\ell z^k = \sum_{k\ge0}F_k(u)z^k =
\frac{\widetilde N(u,z)}{\widetilde D(uz)D(z)}\text,
\end{equation*}
where $D(z)$ is the determinant of $1 - z\mathbf T$, \smash{$\widetilde
D(z)$} is the determinant of \smash{$1 - z\widetilde{\mathbf T}$}, and
\smash{$\widetilde N(u,z)$} may be computed as:
\[\widetilde N(u,z) = \sum_{\mathbf U[I,J] = 1}
\cof[I,\widetilde I_0]\bigl(1 - uz\widetilde{\mathbf T}\bigr)
\cof[I_0,J]\bigl(1 - z\mathbf T\bigr).\]

The polynomial $D(z)$ has degree \smash{$\binom{a+b}{a}$}, the polynomial
\smash{$\widetilde D(z)$} has degree \smash{$\binom{a+b}{a-1}$}, and the
polynomial \smash{$\widetilde N(u,z)$} has a dominant term in
\[\bigl.z\bigr.^{\textstyle\binom{a+b+1}{a} - a - b - 1}
  \bigl.u\bigr.^{\textstyle\binom{a+b}{a-1} - a}.\]
\end{theorem}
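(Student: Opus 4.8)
The plan is to combine the two recurrences of Proposition~\ref{FklT} into a single generating-function identity, then read off the degrees from Theorem~\ref{Fk} applied to both $\mathbf T$ and $\widetilde{\mathbf T}$. First I would iterate the first identity $\mathbf F_{k+1,\ell+1} = \widetilde{\mathbf T}\mathbf F_{k,\ell}$ down to the base case. Since each application raises both indices by one, iterating $\ell$ times gives $\mathbf F_{k,\ell} = \widetilde{\mathbf T}^{\,\ell}\mathbf F_{k-\ell,0}$, and then the second identity $\mathbf F_{k,0} = \mathbf U\mathbf F_k$ together with Theorem~\ref{Fk} (which gives $\mathbf F_k = \mathbf T^k \mathbf F_0$ with $\mathbf F_0$ supported on $I_0$) yields
\[
\mathbf F_{k,\ell} = \widetilde{\mathbf T}^{\,\ell}\,\mathbf U\,\mathbf T^{\,k-\ell}\mathbf F_0.
\]
Extracting the $\widetilde I_0$-component and summing against $u^\ell z^k$ is the crux: I would write $z^k = (uz)^\ell z^{k-\ell}$ so that the exponent of $uz$ tracks the $\widetilde{\mathbf T}$-power and the exponent of $z$ tracks the $\mathbf T$-power. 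This separates the double sum into a product of two matrix geometric series, $(1 - uz\widetilde{\mathbf T})^{-1}$ and $(1 - z\mathbf T)^{-1}$, joined by the single matrix $\mathbf U$, giving
\[
\sum_{k\ge\ell\ge0} F_{k,\ell}\,u^\ell z^k =
\bigl[(1 - uz\widetilde{\mathbf T})^{-1}\,\mathbf U\,(1 - z\mathbf T)^{-1}\bigr]_{\widetilde I_0,\,I_0}.
\]

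Next I would apply Cramer's rule to each inverse. The $(\widetilde I_0, I_0)$ entry of the product expands as a sum over the matrix $\mathbf U$: since $\mathbf U[I,J]=1$ exactly when $I\cup\{a-1\}=J$, inserting $\mathbf U$ between the two inverses picks out, for each such admissible pair $(I,J)$, the product of the $(\widetilde I_0, I)$ entry of $(1-uz\widetilde{\mathbf T})^{-1}$ and the $(J, I_0)$ entry of $(1-z\mathbf T)^{-1}$. By Cramer's rule the former is $\cof[I,\widetilde I_0](1-uz\widetilde{\mathbf T})$ divided by $\widetilde D(uz)$, and the latter is $\cof[I_0,J](1-z\mathbf T)$ divided by $D(z)$. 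Clearing the common denominators $\widetilde D(uz)D(z)$ gives exactly the stated formula for $\widetilde N(u,z)$; the equality $\sum_{k}F_{k,\ell}u^\ell z^k = \sum_k F_k(u)z^k$ is just the definition of the univariate specialization $F_k(u) = \sum_\ell F_{k,\ell}u^\ell$. I expect the main care here is bookkeeping the transpose conventions in the cofactor indices (which entry of the inverse corresponds to which cofactor) so that the row/column labels $I,\widetilde I_0,I_0,J$ land correctly.

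For the degree claims, the degree of $D(z)=\binom{a+b}{a}$ is Theorem~\ref{Fk} verbatim. The degree of $\widetilde D(z)$ follows by applying Theorem~\ref{Fk} to the shifted set $\widetilde S = S-1$: the relevant subsets are now $(a-1)$-subsets of $\ldb-b-1,a-2\rdb$, a set of size $(a-1)+(b+1)=a+b$, so $\widetilde{\mathbf T}$ is a $\binom{a+b}{a-1}\times\binom{a+b}{a-1}$ matrix and $\deg\widetilde D = \binom{a+b}{a-1}$. The dominant term of $\widetilde N(u,z)$ is where I expect the real work: I would argue that the highest power of $u$ comes from the $\widetilde{\mathbf T}$-cofactor, whose dominant $u$-degree is one less than $\deg\widetilde D$ minus the cycle through the $\widetilde I_m$ vertices, mirroring the $N(z)$ degree computation in Theorem~\ref{Fk}; this should give the exponent $\binom{a+b}{a-1}-a$. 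The $z$-degree combines contributions from both cofactors. The combinatorial interpretation — reading $\widetilde N$ as a generating function of cycle configurations on $\widetilde{\mathcal G}$ and $\mathcal G$ that avoid $\widetilde I_0$ and $I_0$ respectively and are linked by a $\mathbf U$-arc — lets me port the Lemma~\ref{lem2} argument: configurations avoiding these vertices also avoid the full $I_m$ and $\widetilde I_m$ cycles, capping the degrees, while the maximal admissible configuration realizes the bound. Identifying $\binom{a+b+1}{a}-a-b-1$ as the resulting $z$-exponent, via the Pascal identity $\binom{a+b}{a}+\binom{a+b}{a-1}=\binom{a+b+1}{a}$ and subtracting the removed cycle lengths, is the step most likely to require delicate counting.
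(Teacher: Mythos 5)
Your proposal follows the paper's proof essentially verbatim: iterate Proposition~\ref{FklT} to write $F_{k,\ell}$ as the $(\widetilde I_0,I_0)$ entry of $\widetilde{\mathbf T}^{\ell}\mathbf U\mathbf T^{k-\ell}$, sum via $u^\ell z^k=(uz)^\ell z^{k-\ell}$ into $(1-uz\widetilde{\mathbf T})^{-1}\mathbf U(1-z\mathbf T)^{-1}$, apply Cramer's rule to get $\widetilde N/(\widetilde D(uz)D(z))$, and obtain the degrees by interpreting the cofactors as cycle configurations and invoking Lemma~\ref{lem2}. The one imprecision is your description of the objects counted by $\widetilde N(u,z)$: they do not \emph{avoid} $\widetilde I_0$ and $I_0$, but consist of disjoint cycles together with a self-avoiding walk \emph{from} $\widetilde I_0$ \emph{to} $I_0$ through a $\mathbf U$-arc (the paper's extremal configuration uses the walk $\widetilde I_0\to\dotsb\to\widetilde I_{-b}\to I_a\to\dotsb\to I_0$); it is this walk that, via Lemma~\ref{lem2}, forbids the remaining $\widetilde I_m$ and $I_m$ from lying on cycles and yields exactly the exponents you state.
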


\begin{proof}
As the only $I_0$-permutation of order $0$ is the identity,
Proposition~\ref{FklT} entails that the polynomial $F_{k,\ell}$ is equal to
the entry \smash{$(\widetilde I_0,I_0)$} in the matrix
\smash{$\widetilde{\mathbf T}^\ell\mathbf U\mathbf T^{k-\ell}$}. In other
terms, $F_{k,\ell}$ is the generating function of walks from
\smash{$\widetilde I_0$} to $I_0$ in the graph $\mathcal H$ taking $\ell$
steps in the graph \smash{$\mathcal G$} and $k-\ell$ steps in the graph
$\mathcal G$. The generating function \smash{$\sum_{k,\ell}F_{k,\ell}u^\ell
z^k$} is thus equal to the entry \smash{$(\widetilde I_0,I_0)$} in the matrix
\smash{$(1 - wz\widetilde{\mathbf T})^{-1}\mathbf U(1 - z\mathbf T)^{-1}$}.
This yields the announced form.

Let us now compute the degrees; let \smash{$\tilde d = \binom{a+b}{a-1}$}.
Theorem~\ref{Fk} shows that the polynomial \smash{$\widetilde D(z)$} has
degree \smash{$\tilde d$}.

To compute the dominant term of the polynomial $N(u,z)$, we first remark that
if $I$ and $J$ are such that $\mathbf U[I,J] = 1$, we have $-b-1\not\in I$ and
$a-1\in J$. This implies that Lemma~\ref{lem1} is still valid in the graph
$\mathcal H$. Let \smash{$\widetilde I_m$}, for $-b-1\le m\le a-1$, be the
vertices of \smash{$\widetilde{\mathcal G}$} defined in the same way as the
vertices $I_m$. Since Lemma~\ref{lem1} is valid, Lemma~\ref{lem2} is also
valid regarding both the vertices $I_m$ and \smash{$\widetilde I_m$}.

We now consider the polynomial $N(u,z)$. This polynomial is the generating
function of configurations in the graph $\mathcal H$ composed of elementary
cycles and a self-avoiding walk going from \smash{$\widetilde I_0$} to $I_0$;
the variable $u$ takes into account the number of arcs of
\smash{$\widetilde{\mathcal G}$} in the configuration. By Lemma~\ref{lem2},
such a configuration cannot visit a vertex \smash{$\widetilde I_m$} with $0\le
m\le a-1$ (since it would contain an arc going into \smash{$\widetilde I_0$}),
nor can it visit a vertex $I_m$ with $-b\le m<0$ (since it would contain an
arc going from $I_0$). This proves that the dominant term is at most in
\[z^{\tilde d-a+d-b-1}u^{\tilde d-a}.\]

Let \smash{$\widetilde\pi_0$} and $\pi_0$ be the only configurations of cycles
visiting all vertices of the graphs \smash{$\widetilde{\mathcal G}$} and
$\mathcal G$, respectively. Consider the configuration consisting of:
\begin{itemize}
\item all cycles of \smash{$\widetilde\pi_0$} except the one containing the
vertices \smash{$\widetilde I_m$};
\item all cycles of $\pi_0$ except the one containing the vertices $I_m$;
\item the self-avoiding walk
\smash{$\widetilde I_0\to\dotsb\to\widetilde I_{-b}\to I_a\to\dotsb\to I_0$}.
\end{itemize}
We check that this configuration contains \smash{$\tilde d-a$} arcs of
\smash{$\mathcal G$} and $d - b - 1$ arcs of $\mathcal G$. We thus derive the
dominant term of the polynomial $N(u,z)$.
\end{proof}

We now consider the generating function $M_k$ of all meanders of height at
most $k$ regardless of final height. From \eqref{Ekl}, we find
\begin{equation}\label{Mk}
M_k = \frac{F_k(1)}{F_{k+1}},
\end{equation}
where the polynomial $G_k$ is the sum of $F_{k,\ell}$ for all $\ell$
between $0$ and $k$. The generating function of the polynomials $G_k$ is
found by setting $w = 1$ in the expression of Theorem~\ref{Fkl}; this proves
that the polynomials $G_k$ follow a linear recurrence relation of order
\smash{$\binom{a+b}{a} + \binom{a+b}{a-1} = \binom{a+b+1}{a}$}.

Let us now take the two examples detailed in Section~\ref{excursions}. The
case of Dyck paths (Figure~\ref{fig-U}, left) is very simple since the graph
\smash{$\widetilde{\mathcal G}$} has only one vertex. If we set $\omega_0 = 0$
and $\omega_1 = \omega_{-1} = t$, the generating function of the polynomials
$F_{k,\ell}$ is:
\[\sum_{k\ge\ell\ge0}F_{k,\ell}u^\ell z^k = \frac{1}{(1-tuz)(1-z+t^2z^2)}.\]
In other words, we have:
\[F_{k,\ell} = t^\ell F_{k-\ell}.\]

Let us now examine the case where $S = \{0,\pm1,\pm2\}$, $\omega_0 = 0$,
$\omega_1 = \omega_{-1} = t_1$ and $\omega_2 = \omega_{-2} = t_2$
(Figure~\ref{fig-U}, right). The polynomials \smash{$\widetilde D(z)$} and
\smash{$\widetilde N(u,z)$} are:
\begin{align*}
\widetilde D(z) &= 1 - t_1z - t_2z^2 - t_1t_2^2z^3 + t_2^4z^4;\\
\widetilde N(u,z) &=
(1 + t_2z)(1 - t_2z + t_1t_2uz^2 - t_2^3u^2z^3 + t_2^4u^2z^4).
\end{align*}
Once again, a simplification by a factor of $1 + t_2z$ occurs in the
computation of the generating function
\smash{$\sum_{k,\ell}F_{k,\ell}u^\ell z^k$}.

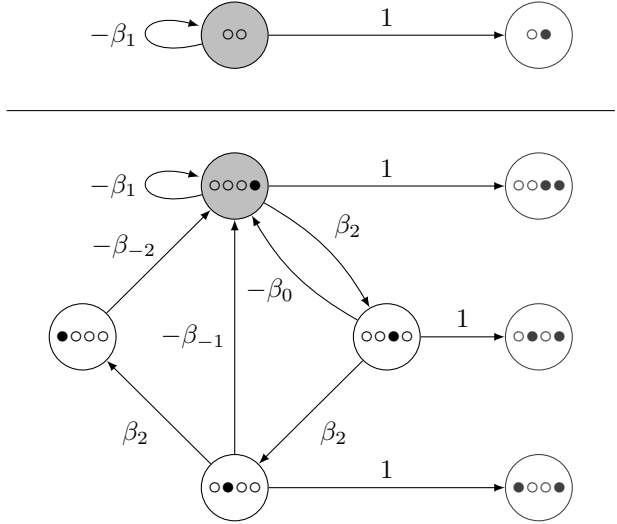
\begin{figure}[htbp]
\hfil%
%\input Tt0.tex\hfil%
%\begin{tikzpicture}[scale=2,baseline=0cm]
%\draw (0,-2.25) -- ++(0,2.5);
%\end{tikzpicture}\hfil%
\begin{tikzpicture}[scale=2,>=latex,baseline=0cm]
\begin{scope}[yshift=1cm]
\node (11) at (0,0) [state,accepting] {\n\n};
\path (11) edge [loop left] node [left] {$-\beta_1$} ();
\node (10) at (2,0) [state,extern] {\n\y};
\path (11) edge node [above] {$1$} (10);
\end{scope}
\draw (-1.5,.5) -- (2.5,.5);
\node (1110) at (0,0) [state,accepting] {\n\n\n\y};
\node (1101) at (1,-1) [state] {\n\n\y\n};
\node (1011) at (0,-2) [state] {\n\y\n\n};
\node (0111) at (-1,-1) [state] {\y\n\n\n};
\path (1110) edge [bend left=15] node [above right] {$\beta_2$} (1101);
\path (1101) edge node [below right] {$\beta_2$} (1011);
\path (1011) edge node [below left] {$\beta_2$} (0111);
\path (0111) edge node [above left=-.25em] {$-\beta_{-2}$} (1110);
\path (1011) edge node [left] {$-\beta_{-1}$} (1110);
\path (1101) edge [bend left=15] node [below left=-.25em] {$-\beta_0$} (1110);
\path (1110) edge [loop left] node [left] {$-\beta_1$} ();
\node (1100) at (2,0) [state,extern] {\n\n\y\y};
\node (1010) at (2,-1) [state,extern] {\n\y\n\y};
\node (0110) at (2,-2) [state,extern] {\y\n\n\y};
\path (1110) edge node [above] {$1$} (1100);
\path (1101) edge node [above] {$1$} (1010);
\path (1011) edge node [above] {$1$} (0110);
\end{tikzpicture}%
\caption{The graphs \smash{$\widetilde{\mathcal G}$} for the sets of steps $S
= \{0,\pm1\}$ (above) and $S = \{0,\pm1,\pm2\}$ (below). Graphical conventions
are identical to those of Figure~\ref{fig-T}, with the vertex corresponding to
\smash{$\widetilde I_0$} colored gray. Arcs with weight~$1$, coded by the
matrix~$\mathbf U$ and leading to vertices of the graph $\mathcal G$ shown in
Figure~\ref{fig-T} are also shown.}
\label{fig-U}
\end{figure}

\section{Symmetric set of steps}\label{symmetric}

We now consider the special case where the set of steps $S$ is
\emph{symmetric}, that is, where $-S = S$ and $\omega_{-i} = \omega_i$ for all
$i\in S$. Bousquet-Mélou already considers this case and shows \cite{bousquet}
that the generating function $E(t)$ is canceled by a polynomial of degree
$2^a$ instead of \smash{$\binom{2a}{a}$}.

While we were not able to recover Bousquet-Mélou's results with our methods,
we show that another phenomenon occurs: namely, the polynomials $F_k$ factor
into two parts, and simplifications occur in the computation of the generating
functions of meanders $E_{k,\ell}$ and $M_k$. These simplifications are
basically due to the fact that as $S$ is symmetric, the entry $(i,j)$ of the
matrix $A_k$ is identical to the entry $(k-i,k-j)$. Define the following two
matrices:
\begin{align*}
A_k^+ &= \Biggl(\omega_{j-i} +
\begin{closedcases}
\omega_{k-j-i}&\text{if $j < k/2$}\\
0&\text{if $j = k/2$}
\end{closedcases}
\Biggr)_{0\le i,j \le k/2}\text,\\
A_k^- &= \Bigl(\omega_{j-i}-\omega_{k-j-i}\Bigr)_{0\le i,j < k/2}
\end{align*}
(all values of $\omega_s$ are taken to be $0$ if $s$ is not in $S$). If $k$ is
an odd number, both matrices have the same dimension and the condition $j =
k/2$ never occurs; if $k$ is an even number, the dimension of
\smash{$A^+_{k+1}$} is one more than that of \smash{$A^-_{k+1}$}. In both
cases, the sum of the two dimensions is $k+1$. The graphs with adjacency
matrices \smash{$A_k^+$} and \smash{$A_k^-$} are illustrated in
Figure~\ref{fig-quotients}.

\begin{figure}[htbp]
\begin{center}
\begin{tikzpicture}[scale=1.6,>=latex,baseline=0cm]
\tikzstyle{every state}+=[minimum size=2em]
\begin{scope}
\foreach \i in {0,...,5}
    \node (\i) at (\i,0) [state] {$\i$};
\path (0) edge [bend left=15] node [above] {$t$} (1);
\path (1) edge [bend left=15] node [above] {$t$} (2);
\path (2) edge [bend left=15] node [above] {$t$} (3);
\path (3) edge [bend left=15] node [above] {$t$} (4);
\path (4) edge [bend left=15] node [above] {$t$} (5);
\path (1) edge [bend left=15] node [below] {$t$} (0);
\path (2) edge [bend left=15] node [below] {$t$} (1);
\path (3) edge [bend left=15] node [below] {$t$} (2);
\path (4) edge [bend left=15] node [below] {$t$} (3);
\path (5) edge [bend left=15] node [below] {$t$} (4);
\end{scope}
\end{tikzpicture}\\
\begin{tikzpicture}[scale=1.6,>=latex,baseline=0cm]
\tikzstyle{every state}+=[minimum size=2em]
\begin{scope}[yshift=-1cm, xshift=-.5cm]
\foreach \i in {0,...,2}
    \node (\i) at (\i,0) [state] {$\i$};
\path (0) edge [bend left=15] node [above] {$t$} (1);
\path (1) edge [bend left=15] node [above] {$t$} (2);
\path (1) edge [bend left=15] node [below] {$t$} (0);
\path (2) edge [bend left=15] node [below] {$t$} (1);
\path (2) edge [loop right] node [right] {$t$} ();
\end{scope}
\begin{scope}[yshift=-1cm,xshift=3.5cm]
\foreach \i in {0,...,2}
    \node (\i) at (\i,0) [state] {$\i$};
\path (0) edge [bend left=15] node [above] {$t$} (1);
\path (1) edge [bend left=15] node [above] {$t$} (2);
\path (1) edge [bend left=15] node [below] {$t$} (0);
\path (2) edge [bend left=15] node [below] {$t$} (1);
\path (2) edge [loop right] node [right] {$-t$} ();
\end{scope}
\end{tikzpicture}%
\end{center}
\caption{The graphs corresponding to the three matrices $A_3$,
\smash{$A_3^+$} and \smash{$A_3^-$} with $S = \{\pm1\}$ and $\omega_1
= \omega_{-1} = t$. They differ only by the vertex~$3$.}
\label{fig-quotients}
\end{figure}
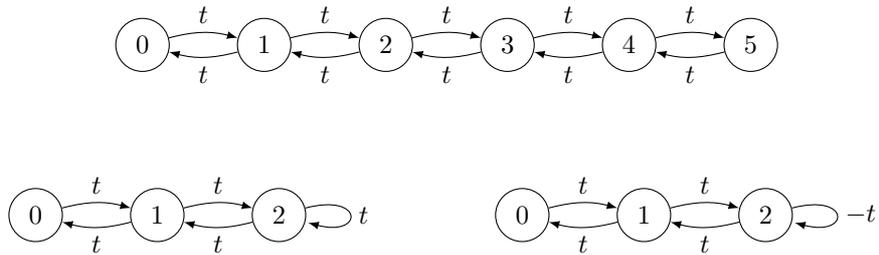

We denote by \smash{$F^+_k$} and \smash{$F^-_k$} the determinants of the
matrices \smash{$1 - A^+_{k-1}$} and \smash{$1 - A^-_{k-1}$}, respectively. We
also denote by \smash{$F^+_{k,\ell}$} and \smash{$F^-_{k,\ell}$} the
$(\ell,0)$ cofactors of the matrices \smash{$1 - A^+_{k}$} and
\smash{$1 - A^-_{k}$}, respectively.

\begin{theorem}\label{quotient}
The polynomial $F_k$ satisfies for all integers~$k$:
\[F_k = F_k^+ F_k^-.\]
Moreover, the generating functions $E_{k,\ell}$ satisfy, for all $\ell$
such that $0\le\ell < k/2$:
\begin{align*}
E_{k,\ell} + E_{k,k-\ell} &= \frac{F_{k,\ell}^+}{F_{k+1}^+}\text;&
E_{k,\ell} - E_{k,k-\ell} &= \frac{F_{k,\ell}^-}{F_{k+1}^-}\text.
\end{align*}
Finally, if $k$ is even and $\ell = k/2$, we have:
\[E_{k,\ell} = \frac{F_{k,\ell}^+}{F_{k+1}^+}.\]
\end{theorem}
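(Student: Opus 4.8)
The plan is to realise the reflection symmetry of the matrices $A_m$ as a block decomposition and to read off all three assertions from it. Write $e_0,\dots,e_m$ for the standard basis of the space on which $A_m$ acts (I will take $m=k-1$ for the first assertion and $m=k$ for the others), and let $R$ be the involution $Re_i=e_{m-i}$. The hypothesis that $S$ and the weights are symmetric is exactly the statement, already noted above, that the $(i,j)$ entry of $A_m$ equals its $(m-i,m-j)$ entry; equivalently $RA_mR^{-1}=A_m$. Since $R^2=1$, the space splits as the direct sum of the symmetric subspace (eigenvalue $+1$) and the antisymmetric subspace (eigenvalue $-1$), and $A_m$ preserves each summand. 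I would take as bases $u_i=e_i+e_{m-i}$ and $w_i=e_i-e_{m-i}$ for $0\le i<m/2$, together with $u_{m/2}=e_{m/2}$ when $m$ is even.

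The key computation, and the only real work, is to check that in these bases $A_m$ acts on the symmetric part exactly as $A_m^+$ and on the antisymmetric part exactly as $A_m^-$. This is a direct calculation: expanding $A_mu_j$ and $A_mw_j$ and using $\omega_{-s}=\omega_s$ shows that the result is again symmetric (resp.\ antisymmetric), and collecting the coefficient of $u_i$ (resp.\ $w_i$) reproduces the entries $\omega_{j-i}+\omega_{m-j-i}$ and $\omega_{j-i}-\omega_{m-j-i}$ of the definitions. The point demanding care is the middle index when $m$ is even: because $u_{m/2}=e_{m/2}$ is not doubled, the row indexed by $m/2$ acquires the factor $2$ recorded in the definition of $A_m^+$, so that $A_m^+$ is \emph{not} symmetric (whereas $A_m^-$ is). Letting $P$ be the change-of-basis matrix, this gives $P^{-1}(1-A_m)P=(1-A_m^+)\oplus(1-A_m^-)$.

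For the first assertion I would apply this with $m=k-1$: the determinant is invariant under conjugation and multiplicative on blocks, so $F_k=\det(1-A_{k-1})=\det(1-A_{k-1}^+)\det(1-A_{k-1}^-)=F_k^+F_k^-$. For the remaining assertions, apply it with $m=k$ and set $G=(1-A_k)^{-1}$ and $G^\pm=(1-A_k^\pm)^{-1}$, so that $G$ acts as $G^+\oplus G^-$ in the new bases. Recall that $E_{k,\ell}$ is the $(0,\ell)$ entry of $G$, that is, the coefficient of $e_0$ in $Ge_\ell$. For $0\le\ell<k/2$ I would use $e_\ell=\tfrac12(u_\ell+w_\ell)$ and $e_{k-\ell}=\tfrac12(u_\ell-w_\ell)$; expanding $Ge_\ell$ and $Ge_{k-\ell}$ in the bases $\{u_i\}$, $\{w_i\}$ and noting that only $u_0$ and $w_0$ contribute to the coefficient of $e_0$ yields $E_{k,\ell}\pm E_{k,k-\ell}=G^\pm_{0,\ell}$. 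Finally Cramer's rule identifies $G^\pm_{0,\ell}=((1-A_k^\pm)^{-1})_{0,\ell}$ with $F^\pm_{k,\ell}/F^\pm_{k+1}$, since $F^\pm_{k,\ell}$ is the $(\ell,0)$ cofactor and $F^\pm_{k+1}=\det(1-A_k^\pm)$; here it is essential that one extracts the $(0,\ell)$ entry rather than $(\ell,0)$, precisely because $A_k^+$ is not symmetric. When $k$ is even and $\ell=k/2$ the vector $e_{k/2}=u_{k/2}$ is already symmetric with no antisymmetric part, so the same computation gives $E_{k,k/2}=G^+_{0,k/2}=F^+_{k,k/2}/F^+_{k+1}$ directly, without the factor $\tfrac12$, which is the last assertion.
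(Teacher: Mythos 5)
Your proposal is correct and is essentially the paper's own argument: your symmetric/antisymmetric eigenspace decomposition for the involution $e_i\mapsto e_{m-i}$, with bases $u_i=e_i+e_{m-i}$ and $w_i=e_i-e_{m-i}$, is exactly the conjugation by the passage matrix $P$ used in the paper to obtain $P^{-1}A_kP=A_k^+\oplus A_k^-$. The only difference is that you spell out the derivation of the identities from this block form (including the factor $2$ in the middle row and the $(0,\ell)$ versus $(\ell,0)$ bookkeeping), which the paper leaves as ``readily derived.''
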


\begin{proof}
Let $\mathcal B = (b_0,\dotsc,b_k)$ be the canonical basis of the underlying
vector space of the matrix $A_k$. We denote by $\mathcal B_0$, $\mathcal
B_1$ and $\mathcal B_2$ the following collections of vectors:
\begin{align*}
\mathcal B_0 &= (b_i)_{0\le i<k/2};\\
\mathcal B_1 &= (b_{k/2})\text{ if $k$ is even and }
\varnothing\text{ otherwise;}\\
\mathcal B_2 &= (b_{k-i})_{0\le i<k/2}.
\end{align*}
Since the entry $(i,j)$ of the matrix $A_k$ is equal to the entry
$(k-i,k-j)$, the matrix $A_k$ written as a block matrix with respect to the
basis $(\mathcal B_0,\mathcal B_1, \mathcal B_2)$ looks like:
\[A_k = \left(
\begin{mymatrix}{c|c|c}
B&U&C\arraynl
V&x&V\arraynl
C&U&B
\end{mymatrix}
\right).\]
Let $P$ be the following passage matrix:
\[P = \left(
\begin{mymatrix}{c|c|c}
1&0&1\arraynl
0&1&0\arraynl
1&0&-1
\end{mymatrix}
\right),\]
where $1$ denotes the identity matrix of the appropriate dimension. We
compute:
\[P^{-1}A_kP = \left(
\begin{mymatrix}{c|c|c}
B+C&U&0\arraynl
2V&x&0\arraynl
0&0&B-C
\end{mymatrix}
\right) =
\left(
\begin{mymatrix}{c|c}
A_k^+&0\arraynl
0&A_k^-
\end{mymatrix}
\right).\]
All identities of the theorem are readily derived from this form.
\end{proof}

% reflection principle?

This result shows that the generating functions $E_{k,\ell}$ can be computed
using smaller polynomials than expected, since the matrices \smash{$A_k^+$}
and \smash{$A_k^+$} (and thus their determinants and cofactors) are about
twice as small as $A_k$. Moreover, we can derive from the proposition a
simplified expression for the generating function of meanders $M_k$. By
writing $M_k$ as the sum of $E_{k,\ell}$ for all $0\le\ell\le k$ and grouping
the terms by pairs, we find:
\begin{equation}\label{Mk-sym}
M_k = \frac{F^+_k(1)}{F^+_{k+1}}.
\end{equation}
This expression involves smaller polynomials that \eqref{Mk} (a simplification
occurs by a factor of \smash{$F^-_{k+1}$}).

\begin{theorem}\label{Fkl-sym}
The generating functions of the polynomials \smash{$F_k^+$} and
\smash{$F_k^-$} are of the form:
\begin{align*}
\sum_{k\ge0} F_k^+z^{k} &= \frac{N^{+}(z)}{D(z^2)}\text;&
\sum_{k\ge0} F_k^-z^{k} &= \frac{N^{-}(z)}{D(z^2)}\text,
\end{align*}
where $D(z)$ is the polynomial defined in Theorem~\ref{Fk} and $N^+(z)$ and
$N^-(z)$ are polynomials. Moreover, the bivariate generating functions of the
polynomials \smash{$F_{k,\ell}^+$} and \smash{$F_{k,\ell}^-$} are of the form:
\begin{align*}
\sum_{k\ge\ell\ge 0} F_{k,\ell}^+ u^\ell z^k
 &= \frac{\widetilde N^{+}(u,z)}{\widetilde D(uz^2)D(z^2)}\text;&
\sum_{k\ge\ell\ge 0} F_{k,\ell}^- u^\ell z^k
 &= \frac{\widetilde N^{-}(u,z)}{\widetilde D(uz^2)D(z^2)}\text,
\end{align*}
where \smash{$\widetilde D(z)$} is the polynomial defined in
Theorem~\ref{Fkl} and \smash{$\widetilde N^+(u,z)$} and \smash{$\widetilde
N^-(u,z)$} are polynomials.
\end{theorem}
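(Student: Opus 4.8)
The plan is to show that, in the symmetric case, each polynomial $F_k^\pm$ is — up to a fixed boundary contribution — an entry of a power of the \emph{same} transfer matrix $\mathbf T$ that governed Theorem~\ref{Fk}, and that the substitution $z\mapsto z^2$ arises simply because the matrices $1-A_{k-1}^\pm$ have dimension $\lceil k/2\rceil$ (resp.\ $\lfloor k/2\rfloor$) rather than $k$. Since $S$ is symmetric we have $a=b$ and hence $S\subseteq\ldb-a,a\rdb$. Consequently, in $1-A_{k-1}^\pm$ the reflection term $\omega_{k-1-i-j}$ (with its sign) vanishes unless $i+j\ge k-1-a$, i.e.\ unless the entry lies within distance $\sim a$ of the fold corner. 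Away from that corner the $(i,j)$ entry is exactly $\beta_{j-i}$, so $1-A_{k-1}^\pm$ coincides with the homogeneous matrix of Section~\ref{excursions} except for a bounded band of modified rows near the fold.

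First I would run the head--tail recursion of Proposition~\ref{FkT}, building the associated vector from the bottom row (the natural $I_0$ start) upward: it is governed by $\mathbf T$ throughout the homogeneous region and is read off, at the fold, against a fixed covector $w_r^\pm$ encoding the modified boundary. Crucially, $w_r^\pm$ depends only on $S$, on the sign $\pm$, and on the parity $r$ of $k$ (which, via the middle block $\mathcal B_1$ of Theorem~\ref{quotient}, decides whether the central vertex is present), but \emph{not} on $k$ itself. Because $\det(\mathbf T)\ne0$ by Theorem~\ref{Fk}, any bounded offset in the matrix power can be absorbed into $w_r^\pm$, giving an expression of the shape
\[F_{2m+r}^\pm=(w_r^\pm)^{T}\,\mathbf T^{\,m}\,\mathbf F_0,\qquad r\in\{0,1\}.\]
Summing over $m$ for each parity and recombining yields
\[\sum_{k\ge0}F_k^\pm z^k=\sum_{r=0}^{1}z^r\,(w_r^\pm)^{T}\bigl(1-z^2\mathbf T\bigr)^{-1}\mathbf F_0,\]
which by Cramer's rule is rational with denominator $\det(1-z^2\mathbf T)=D(z^2)$. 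The numerators $N^\pm(z)$ differ through the covectors $w_r^\pm$, but the denominator is the common factor $D(z^2)$, as claimed.

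The bivariate statement follows the same pattern, now starting from the meander transfer structure of Theorem~\ref{Fkl}. For $0\le\ell<k/2$ the $(\ell,0)$ cofactor $F_{k,\ell}^\pm$ of $1-A_k^\pm$ splits into a lower block of $\ell$ rows carrying the tilde weights (governed by $\widetilde{\mathbf T}$), joined through $\mathbf U$ to an upper block of $\sim k/2-\ell$ rows carrying the folded $\beta$-weights (governed by $\mathbf T$ away from the fold), terminated against a fold covector. The total number of folded steps is $\sim k/2$, so writing $z^k=(z^2)^{k/2}$ distributes $uz^2$ over each of the $\ell$ tilde steps and $z^2$ over each of the $\sim k/2-\ell$ plain steps; the two blocks therefore contribute denominators $\det(1-uz^2\widetilde{\mathbf T})=\widetilde D(uz^2)$ and $\det(1-z^2\mathbf T)=D(z^2)$, giving $\widetilde D(uz^2)D(z^2)$.

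The main obstacle will be the bookkeeping at the fold: I must verify that the modified rows near the fold corner, together with the parity-dependent central vertex, assemble into a single covector $w_r^\pm$ that is genuinely independent of $m$, so that it enters only the numerator. This requires checking that the head--tail recursion of Proposition~\ref{FkT} persists through the homogeneous band right up to the fold, and treating the four cases (sign $\pm$ times parity of $k$) separately but uniformly. Once this is done, the denominators are forced to be $D(z^2)$ and $\widetilde D(uz^2)D(z^2)$ purely by Cramer's rule, with no further computation required.
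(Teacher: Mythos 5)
Your proposal is correct and takes essentially the same route as the paper: both arguments observe that $1-A_{k-1}^\pm$ coincides with the homogeneous band matrix of Section~\ref{excursions} except in a bounded block at the fold, so the same transfer matrices $\mathbf T$, $\widetilde{\mathbf T}$ and $\mathbf U$ apply with $k$ advancing by two per step, and Cramer's rule then forces the denominators $D(z^2)$ and $\widetilde D(uz^2)D(z^2)$. The only cosmetic difference is that you package the fold data into a parity-dependent terminal covector $w_r^\pm$, whereas the paper encodes it in the initial (small-$k$) vectors of the recurrences $\mathbf F^-_k=\mathbf T\mathbf F^-_{k-2}$ and $\mathbf F^-_{k+2,\ell+1}=\widetilde{\mathbf T}\mathbf F^-_{k,\ell}$.
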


\begin{proof}
This theorem is a consequence of a fact hinted at in
Figure~\ref{fig-quotients}: the matrices \smash{$A_k^+$} and \smash{$A_k^-$}
are nearly identical to the matrices $A_{k^+}$ and $A_{k^-}$, respectively,
where \smash{$k^+ = \bigl\lfloor\frac k2\bigr\rfloor$} and
\smash{$k^- = \bigl\lfloor\frac{k-1}2\bigr\rfloor$}. More precisely, the term
$\omega_{k-j-i}$ is zero if $k-j-i > a$; since $j\le k/2$, this is true
whenever $i < k/2 - a$.

For simplicity, we only prove the results associated to the matrix
\smash{$A_k^-$}, but the case of \smash{$A_k^+$} is identical. The proof
follows the same techniques used in the previous sections. If $I$ is a
$a$-subset of $\ldb-a,a-1\rdb$, we denote by \smash{$\mathbf F^-_{k+1}$} the
vector whose $I$-coefficient is:
\[\mathbf F^-_{k}[I] =
\sum_{\sigma\in\mathfrak S^{(I)}_{k^-}} \epsilon(\sigma)
\prod_{i=0}^{k^--1}\bigl(\beta_{\sigma(i)-i}-\omega_{k-1-\sigma(i)-i}\bigr).\]
Assume now that $k-1 > 2a$ and examine the first term of the product: the
above remark entails that $\omega_{k-\sigma(i)-i} = 0$, which means that the
first term is equal to $\beta_{\sigma(i)-i}$. This allows us to repeat the
proof of Proposition~\ref{FkT}. As the matrix \smash{$A^-_k$} minus its first
row and first column is equal to \smash{$A^-_{k-2}$}, we find:
\[\mathbf F^-_k = \mathbf T \mathbf F^-_{k-2}.\]

In the same way, we define the vectors \smash{$\mathbf F^-_{k,\ell}$}; by
repeating the proof of Proposotion~\ref{FklT}, we find, if $k$ is sufficiently
large:
\begin{align*}
\mathbf F^-_{k+2,\ell+2} &= \widetilde{\mathbf T}\mathbf F^-_{k,\ell};\\
\mathbf F^-_{k,0} &= \mathbf U\mathbf F^-_k.
\end{align*}
All the identities of the theorem are derived from these recurrence relations.
\end{proof}

As an example, we take the Fibonacci polynomials, corresponding to the set $S
= \{\pm1\}$ (see Section~\ref{excursions}). The values of the polynomials
\smash{$F_k^+$} and \smash{$F_k^-$} are given by:
\begin{align*}
F_{2k}^+ &= F_k - tF_{k-1};&
F_{2k+1}^+ &= F_{k+1} - t^2F_{k-1};\\
F_{2k}^- &= F_k + tF_{k-1};&
F_{2k+1}^- &= F_k.
\end{align*}
Obviously, these four sequences of polynomials follow the same recurrence
relation as the polynomials $F_k$.

\bibliographystyle{plain}
\bibliography{biblio}{}

\end{document}